\newtheorem{proposition}{Proposition}
\newcommand{\abs}[1]{\left\lvert#1\right\rvert}
\renewcommand*{\Re}{\operatorname{Re}}
\newcommand{\ZZ}{\mathbb{Z}}
\newcommand{\Q}{\mathbb{Q}}
\newcommand{\Qbar}{\overline{\Q}}
\theoremstyle{plain} %italicizes text
\newtheorem{theorem}{Theorem}[section]
\newtheorem*{theorem*}{Theorem}
\newtheorem{lemma}[theorem]{Lemma}
\newtheorem{conjecture}[theorem]{Conjecture}
\newtheorem*{conjecture*}{Conjecture}
\newtheorem{definition}[theorem]{Definition}
\theoremstyle{remark}
\newtheorem*{remark}{Remark}
\title{Parts in $k$-indivisible Partitions Always Display Biases between Residue Classes}
\author[F. Jackson]{Faye Jackson}
\address{University of Chicago, Eckhart Hall, 5734 S University Ave, Chicago, IL 60637}
\email{alephnil@uchicago.edu}
\author[M. Otgonbayar]{Misheel Otgonbayar}
\address{Massachusetts Institute of Technology, 77 Massachusetts Avenue
Cambridge, MA 02139-4307}
\email{misheel@mit.edu}
\keywords{Parts in partitions, $k$-indivisible partitions, $L$-functions, Digamma function}
\subjclass{05A17,11P82,11P81}
\date{\today}
\providecommand\@dotsep{5}
\def\listtodoname{TODOS: Not Done (Red), Cite (Orange), Blue (Notation)}
\def\listoftodos{\@starttoc{tdo}\listtodoname}
\newcommand{\linecomment}[1]{}
\newcommand{\CC}{\mathbb{C}}
\DeclareMathOperator{\Gal}{Gal}
\newcommand{\Mod}[1]{\ (\mathrm{mod}\ #1)}
\numberwithin{equation}{section}
\newcommand{\Qab}{\Q^{\text{ab}}}
\crefname{equation}{equation}{equations}
\crefname{prop}{proposition}{propositions}
\Crefname{prop}{Proposition}{Propositions}
\begin{document}

\maketitle

\begin{abstract}
    Let $k, t$ be coprime integers, and let $1 \leq r \leq t$. We let $D_k^\times(r,t;n)$ denote the total number of parts among all $k$-indivisible partitions (i.e., those partitions where no part is divisible by $k$) of $n$ which are congruent to $r$ modulo $t$. In previous work of the authors \cite{kindivis}, an asymptotic estimate for $D_k^\times(r,t;n)$ was shown to exhibit unpredictable biases between congruence classes. In the present paper, we confirm our earlier conjecture in \cite{kindivis} that there are no ``ties'' (i.e., equalities) in this asymptotic for different congruence classes. To obtain this result, we reframe this question in terms of $L$-functions, and we then employ a nonvanishing result due to Baker, Birch, and Wirsing \cite{nonvanishing} to conclude that there is always a bias towards one congruence class or another modulo $t$ among all parts in $k$-indivisible partitions of $n$ as $n$ becomes large.
\end{abstract}

\section{Introduction}

A $k$-indivisible partition of some integer $n > 0$ is a nonincreasing sequence $\lambda = (\lambda_1,\ldots,\lambda_m)$ of positive integers such that $k \nmid \lambda_j$ for all $j$ and $\sum_{j=1}^m \lambda_j = n$. We write $\mathcal{D}^\times_k(n)$ for the set of all such $k$-indivisible partitions of $n$. In previous work (see \cite{kindivis}), the authors studied the number of parts congruent to $r$ with respect to a fixed modulus $t$ among all $k$-indivisible partitions of $n$. Formally, this quantity can be defined as
\[
    D_k^\times(r,t;n) \coloneqq \sum_{\lambda \in \mathcal{D}^\times_k(n)} \#\{\lambda_j \mid \lambda_j \equiv r \Mod t\}.
\]
Using Wright's circle method, the authors proved the following asymptotic estimate for $D_k^\times(r,t;n)$ as $n \to \infty$ when $k,t \geq 2$ are taken to be coprime and $1 \leq r \leq t$ (see \cite[Theorem~1.1]{kindivis}):
\begin{equation}
        D_k^\times(r,t;n) = A_{k,t}(n)\left(\frac{K}{2}\log n + \left(- \psi\left(\frac{r}{t}\right) +k^{-1}\psi\left(\frac{\rho_{k,t}(r)}{t}\right)\right) + C_{k,t} + O\left(n^{-\frac{1}{2}}\log n\right)\right), \label{eq:kindivis-asym}
\end{equation}
where $\psi(x) \coloneqq \frac{\Gamma'(x)}{\Gamma(x)}$ is the digamma function, $1 \leq \rho_{k,t}(r) \leq t$ is a representative\footnote{In \cite{kindivis}, this representative $\rho_{k,t}(r)$ is denoted $\overline{r}$, suppressing the dependence on $k,t$. In this paper, we maintain this dependence due to its appearance in later proofs} of $k^{-1}r$ modulo $t$, and
\begin{align*}
    K \coloneqq 1 - \frac{1}{k} && C_{k,t} \coloneqq \frac{K}{2}\log\left(\pi\sqrt{\frac{K}{6}}\right) - K \log t + \frac{\log k}{k} && A_{k,t}(n) \coloneqq \frac{3^{\frac{1}{4}}e^{\pi\sqrt{\frac{2Kn}{3}}}}{2^{\frac{3}{4}}K^{\frac{1}{4}}n^{\frac{1}{4}}\pi t\sqrt{k}}.
\end{align*}
This asymptotic implies a weak asymptotic equidistribution among the congruence classes modulo $t$, i.e. for each $1 \leq r,s \leq t$ we have that $\frac{D_k^\times(r,t;n)}{D_k^\times(s,t;n)} \to 1$ as $n \to \infty$. However, it also implies a bias towards certain congruence classes. If we define
\[
    \psi_{k,t}(r) \coloneqq -\psi\left(\frac{r}{t}\right) + \frac{1}{k}\psi\left(\frac{\rho_{k,t}(r)}{t}\right),
\]
then if $\psi_{k,t}(r) < \psi_{k,t}(s)$, we have that $D_k^\times(r,t;n) < D_k^\times(s,t;n)$ for large $n$. This encourages defining an ordering $\prec_{k,t}$ on the integers $\{1,\ldots,t\}$ (equivalently on $\ZZ/t\ZZ$), where $r \prec_{k,t} s$ provided that $D_k^\times(r,t;n) < D_k^\times(s,t;n)$ for sufficiently large $n$. This ordering depends simultaneously on the size of $k$ and the congruence class of $k$ modulo $t$, and is incredibly intricate. For a detailed exposition of the known properties of $\prec_{k,t}$, we refer to the authors' previous paper \cite{kindivis}. It is unclear from \cref{eq:kindivis-asym} that $\prec_{k,t}$ is necessarily a total ordering, and so the following conjecture was posed within the author's previous paper.
\begin{conjecture*}[No Ties {\cite[Conjecture~1.3]{kindivis}}]
    The ordering $\prec_{k,t}$ is a total ordering on $\ZZ/t\ZZ$ when $k,t \geq 2$ are coprime.
\end{conjecture*}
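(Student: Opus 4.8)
The plan is to prove the conjecture by showing that the map $r \mapsto \psi_{k,t}(r)$ is injective on $\{1,\dots,t\}$: a strict order on a finite set is total precisely when it has no ties, so it suffices to rule out equalities $\psi_{k,t}(r) = \psi_{k,t}(s)$ with $r \neq s$. Fix such a pair and suppose, toward a contradiction, that $\psi_{k,t}(r) = \psi_{k,t}(s)$. First I would translate this digamma identity into the vanishing of an $L$-value. Define $f \colon \ZZ/t\ZZ \to \Q$ by attaching the coefficient $+1$ to $r$, $-\tfrac1k$ to $\rho_{k,t}(r)$, $-1$ to $s$, and $+\tfrac1k$ to $\rho_{k,t}(s)$ (adding coefficients whenever two of these residues coincide), and $0$ elsewhere. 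A short case check over the possible coincidences among $r,s,\rho_{k,t}(r),\rho_{k,t}(s)$ shows the total coefficient at $r$ is one of $1,\,1\pm\tfrac1k$, hence nonzero since $k\ge 2$; so $f \not\equiv 0$. Moreover $\sum_a f(a) = 1 - \tfrac1k - 1 + \tfrac1k = 0$, so $L(1,f) := \sum_{n\ge 1} f(n)/n$ converges, and the series definition of $\psi$ (equivalently a Hurwitz zeta computation) yields
\[
  L(1,f) \;=\; -\frac1t\sum_{a=1}^{t} f(a)\,\psi\!\lr{\frac{a}{t}} \;=\; -\frac1t\lr{\psi_{k,t}(s) - \psi_{k,t}(r)}.
\]
Thus the assumed tie is equivalent to $L(1,f) = 0$, and the theorem reduces to proving $L(1,f)\neq 0$ for this explicit rational, mean-zero, nonzero $f$.

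Second, I would invoke the Baker--Birch--Wirsing theorem: if $f$ is a nonzero $\Qbar$-valued function on $\ZZ/q\ZZ$ with $f(a)=0$ whenever $1<\gcd(a,q)<q$, and if $\Phi_q$ is irreducible over the field generated by the values of $f$, then $L(1,f)\neq 0$. In our setting the values of $f$ are rational, so the cyclotomic hypothesis is automatic ($\Phi_t$ is irreducible over $\Q$); only the support hypothesis needs to be arranged. Since $\gcd(\rho_{k,t}(r),t)=\gcd(r,t)$ and $\gcd(\rho_{k,t}(s),t)=\gcd(s,t)$, the support of $f$ falls into two ``levels'' governed by $d_1:=\gcd(r,t)$ and $d_2:=\gcd(s,t)$. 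When $d_1=d_2=:d$, every fraction $a/t$ in the sum reduces to $(a/d)/t_0$ with numerator coprime to $t_0:=t/d$, and $\psi(a/t)=\psi((a/d)/t_0)$; the displayed identity then rewrites $\sum_a f(a)\psi(a/t)$ as $-t_0\,L(1,g)$ for the nonzero rational function $g(u):=f(du)$ on $\ZZ/t_0\ZZ$, which is supported entirely on units (here $r\neq s$ forces distinct reduced numerators and $t_0>1$). For this $g$ the Baker--Birch--Wirsing hypotheses hold verbatim, so $L(1,g)\neq 0$ and hence $L(1,f)=\tfrac1d L(1,g)\neq 0$. In particular this settles the coprime case $d_1=d_2=1$.

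The hard part will be the remaining case $d_1\neq d_2$, where the two levels reduce to genuinely different moduli $t_1=t/d_1$ and $t_2=t/d_2$ and the single-modulus form of Baker--Birch--Wirsing no longer applies directly (reassembling at modulus $t$ leaves $f$ supported off the units). Here my plan is to expand each level over Dirichlet characters: writing the unit-supported reductions $g_{t_1},g_{t_2}$ as $\Qbar$-combinations of characters modulo $t_1$ and $t_2$ and passing to the inducing primitive characters (which introduces only algebraic Euler-factor constants), the evaluation above expresses $L(1,f)$ as a $\Qbar$-linear combination of the primitive values $L(1,\chi^\ast)$ of conductor dividing $t_1$ or $t_2$, together with the transcendental constants $\pi$ and $\log(t_1/t_2)$ coming from the odd parts and the differing conductors. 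The crux is then the $\Qbar$-linear independence, \emph{across the two conductors}, of these primitive $L$-values and of $\pi$ --- precisely the strengthened, character-theoretic consequence of the Baker--Birch--Wirsing nonvanishing theorem --- and exhibiting a surviving nonzero coefficient (ruling out cancellation between the two levels), which is exactly where the discrepancy $t_1\neq t_2$ must be used, is the step I expect to require the most care.
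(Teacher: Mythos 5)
Your first two steps are correct and essentially match the paper: the translation of a tie $\psi_{k,t}(r)=\psi_{k,t}(s)$ into $L(1,f)=0$ via the Murty--Saradha identity is exactly the paper's setup, and your treatment of the case $\gcd(r,t)=\gcd(s,t)$ (reduce to modulus $t_0=t/d$, get a unit-supported rational $g$, apply Baker--Birch--Wirsing's nonvanishing theorem) is valid --- indeed somewhat more self-contained than the paper, which settles that case by twisting down to the known fact $[1]_{k,t}=\{1\}$ cited from \cite{kindivis}. The problem is the case $\gcd(r,t)\neq\gcd(s,t)$, which you defer to the end: that is where the entire content of the theorem lies, and the route you sketch for it does not work. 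You propose to expand the two levels in Dirichlet characters and then invoke $\Qbar$-linear independence, \emph{across the two conductors} $t_1=t/d_1$ and $t_2=t/d_2$, of the primitive values $L(1,\chi^\ast)$ together with $\pi$ and logarithms. No such statement follows from Baker--Birch--Wirsing: their theorem is a fixed-modulus, unit-supported result, and even for a single modulus the $\Qbar$-linear independence of the $L(1,\chi)$ requires additional hypotheses. Worse, in the generality you need it is false. The relation $\psi(1/4)-3\psi(1/2)+\psi(3/4)+\psi(1)=0$, quoted in the paper's first remark, produces a nonzero, rational-valued, mean-zero function $f$ modulo $4$, supported across the levels $\gcd=1,2,4$, with $L(1,f)=0$; its level-by-level decomposition is a vanishing combination of exactly the kind of cross-conductor constants you would need to be independent. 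Note also that in your unequal-gcd case each level's reduction (e.g.\ $-\mathbbm{1}_{r/d_1}+\tfrac1k\mathbbm{1}_{\rho_{k,t}(r)/d_1}$) has nonzero mean $-1+\tfrac1k$, so principal-character/logarithmic contributions are unavoidably present --- precisely the constants among which the counterexample exhibits a relation. Any correct argument must therefore use the specific coefficient pattern of $f_{r,s}$, not merely its support structure; cancellation between levels genuinely can occur.

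The idea you are missing is the paper's central tool: Baker--Birch--Wirsing's Lemma~4, which says the kernel of $L(1,-)$ is stable under the Galois twist $f^\sigma(n)=\sigma f(hn)$. For rational-valued $f$ this gives a propagation rule for ties: if $\psi_{k,t}(r)=\psi_{k,t}(s)$, then $\psi_{k,t}(hr)=\psi_{k,t}(hs)$ for every $h$ coprime to $t$. The paper then argues as follows: take $t$ minimal admitting a tie with $r\neq s$; minimality together with the reduction $r\sim_{k,t}s\Rightarrow r/x\sim_{k,t/x}s/x$ (for common divisors $x$ of $r,s,t$) forces $a=\gcd(r,t)$ and $b=\gcd(s,t)$ to be coprime, so some odd prime $p$ divides, say, $a$. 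The elementary choice $h=ty/p+1$, for suitable $y\not\equiv 0\pmod p$, is a unit modulo $t$ satisfying $hr\equiv r\pmod t$ but $hs\not\equiv s\pmod t$. Twisting the tie by $h$ and using transitivity yields $s\sim_{k,t}hs$ with $hs\neq s$ and $\gcd(hs,t)=\gcd(s,t)$, contradicting the equal-gcd case you already proved. This reduction of the hard case to the easy case is what replaces the (unavailable) cross-conductor independence; without it, your outline does not close.
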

This conjecture is resolved by the following theorem, which constitutes our principal result in the current paper.
\begin{theorem}\label{thm:no-ties}
    Let $k,t \geq 2$ be coprime, then for any $1 \leq r \neq s \leq t$, we have that $\psi_{k,t}(r) \neq \psi_{k,t}(s)$. As a consequence, the No Ties Conjecture holds.
\end{theorem}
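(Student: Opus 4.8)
The plan is to recast the inequality $\psi_{k,t}(r)\neq\psi_{k,t}(s)$ as the non-vanishing of a special value $L(1,f)$ of a Dirichlet-type $L$-series attached to a rational-valued periodic function $f$, and then to invoke the Baker--Birch--Wirsing non-vanishing theorem \cite{nonvanishing}. Clearing the denominator $k$, one has
\[
    \Lambda \coloneqq k\bigl(\psi_{k,t}(r)-\psi_{k,t}(s)\bigr) = -k\,\psi\!\lr{\tfrac{r}{t}} + k\,\psi\!\lr{\tfrac{s}{t}} + \psi\!\lr{\tfrac{\rho_{k,t}(r)}{t}} - \psi\!\lr{\tfrac{\rho_{k,t}(s)}{t}},
\]
so it suffices to prove $\Lambda\neq 0$. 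Crucially, the four coefficients $-k,k,1,-1$ sum to $0$, which is exactly the feature that will make the associated $L$-series converge. A useful preliminary observation is that, since $\gcd(k,t)=1$, multiplication by $k^{-1}$ preserves greatest common divisors with $t$; hence $\gcd(\rho_{k,t}(r),t)=\gcd(r,t)$ and $\gcd(\rho_{k,t}(s),t)=\gcd(s,t)$.

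\textbf{The bridge to $L$-functions.} I would use the classical expansion $\psi(x)+\gamma=-\sum_{n\geq 0}\lr{\tfrac{1}{n+x}-\tfrac{1}{n+1}}$. Because the coefficients of $\Lambda$ sum to zero, the $\gamma$ terms and comparison terms cancel, and rewriting each argument in lowest terms as $\tfrac{a}{t}=\tfrac{a'}{t'}$ with $t'=t/\gcd(a,t)$ and $\gcd(a',t')=1$ collects the contributions into rational periodic functions supported only on residues coprime to the relevant reduced modulus. When the four fractions share a single reduced modulus $t'$ (i.e.\ $\gcd(r,t)=\gcd(s,t)$), I assemble them into one function $f$ modulo $t'$ with $\Lambda = -t'\,L(1,f)$. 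Then $f$ is rational-valued, so $\Phi_{t'}$ is automatically irreducible over the field generated by its values, and $f$ is supported on residues coprime to $t'$, so the support hypothesis of \cite{nonvanishing} holds; the theorem yields $L(1,f)=0 \iff f\equiv 0$. The required non-triviality $f\not\equiv 0$ is easy: the coefficient $-k$ at the class of $r'$ has absolute value at least $2$, so it can be cancelled only by the coefficient $+k$ at $s'$, forcing $r'=s'$ and hence $r=s$; the remaining coefficients $\pm 1$ are too small, and $\rho_{k,t}$ being a bijection prevents the other two classes from coinciding with each other. Thus $f(r')\neq 0$ and $\Lambda\neq 0$.

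\textbf{The main obstacle.} The genuine difficulty is the case $\gcd(r,t)\neq\gcd(s,t)$, where the four digamma values reduce to two \emph{different} moduli $t_1'\neq t_2'$ and cannot be packaged into a single convergent $L(1,f)$. Here I would instead expand $\Lambda$ through Gauss's closed-form evaluation of $\psi$ at rationals, obtaining a linear form
\[
    \Lambda = K\log\!\lr{\tfrac{t_1'}{t_2'}} + \pi\alpha + \sum_{i,j}\beta_{ij}\,\log\sin\!\lr{\tfrac{\pi j}{t_i'}}
\]
with real algebraic $\alpha,\beta_{ij}$, that is, a linear form in logarithms of algebraic numbers. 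Baker's theorem then forces $\Lambda\neq 0$ unless the underlying algebraic numbers satisfy a multiplicative relation matching these coefficients, and ruling out such a relation is the assertion that the cyclotomic contributions from the distinct fields $\Q(\zeta_{t_1'})$ and $\Q(\zeta_{t_2'})$ are independent. Verifying this independence — equivalently, pushing the Baker--Birch--Wirsing criterion through when several moduli occur simultaneously, together with the corresponding non-triviality bookkeeping — is the step I expect to be delicate; by contrast the single-modulus application above is routine.
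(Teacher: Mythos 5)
Your equal-gcd case is sound. With $d = \gcd(r,t) = \gcd(s,t)$ and $t' = t/d$, all four arguments reduce to fractions with denominator $t'$ and numerators coprime to $t'$ (using, as you note, that multiplication by $k^{-1}$ preserves gcds with $t$); the coefficients $(-k,k,1,-1)$ sum to zero; and your bookkeeping (the coefficient at $r'$ lies in $\{-k-1,-k,-k+1\}$, since injectivity of $n \mapsto k^{-1}n$ prevents the $+1$ and $-1$ from both landing on $r'$) shows the packaged function is nonzero. Since it is rational-valued and supported on units modulo $t'$, \cite[Theorem~1]{nonvanishing} gives $L(1,f) \neq 0$, hence $\Lambda \neq 0$. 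This is essentially the mechanism the paper uses for this case, via \cref{item:common-div} of \Cref{lemma:ties-properties} and \Cref{lemma:coprime-no-ties}.

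The mixed-gcd case, however, is a genuine gap, and the principle you propose to verify there is false, not merely delicate. You want Baker's theorem to reduce everything to the claim that ``the cyclotomic contributions from the distinct fields $\Q(\zeta_{t_1'})$ and $\Q(\zeta_{t_2'})$ are independent,'' i.e.\ that a vanishing combination forces the part at each reduced modulus to vanish separately. The paper's own introductory remark refutes this:
\[
    \psi(1/4) - 3\psi(1/2) + \psi(3/4) + \psi(1) = 0
\]
is a vanishing rational combination of digamma values whose arguments have reduced denominators $4$, $2$, $4$, $1$, with total coefficient sum zero but nonzero coefficient sum within each modulus — the same qualitative structure as your $\Lambda$ — and yet no single-modulus part vanishes. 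So no amount of care with linear forms in logarithms yields the blanket cross-modulus independence you are hoping for; a correct argument must exploit the specific coefficients $(-k,k,1,-1)$ and the coupling $\rho_{k,t}(r) \equiv k^{-1}r$, and your sketch provides no mechanism for that. (A smaller error: your claim that the mixed case ``cannot be packaged into a single convergent $L(1,f)$'' is wrong. The function $f_{r,s}$ of \cref{eq:f-rs}, taken modulo the original $t$, has coefficient sum zero and satisfies $L(1,f_{r,s}) = -\frac{1}{t}\left(\psi_{k,t}(r)-\psi_{k,t}(s)\right)$; what fails is only the support hypothesis of \cite[Theorem~1]{nonvanishing}.)

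The paper's route avoids mixed-modulus nonvanishing entirely, and you could graft it onto your first case. Baker--Birch--Wirsing's Lemma~4 (\Cref{lemma:nonvanishing}) has no support hypothesis: the kernel of $L(1,-)$ is stable under the twist $f \mapsto f(h\,\cdot)$ for any $h$ coprime to $t$. Applied to $f_{r,s}$ this gives the propagation rule that $\psi_{k,t}(r) = \psi_{k,t}(s)$ implies $\psi_{k,t}(rh) = \psi_{k,t}(sh)$. Taking a minimal counterexample, so that $a = \gcd(r,t)$ and $b = \gcd(s,t)$ are coprime and both exceed $1$, one picks an odd prime $p \mid a$ and sets $h = ty/p + 1$ with $y$ chosen so that $h$ is coprime to $t$; then $rh \equiv r \pmod{t}$ but $sh \not\equiv s \pmod{t}$, so twisting produces a tie between $s$ and $sh$ with $\gcd(sh,t) = \gcd(s,t)$, contradicting exactly the equal-gcd case you already proved.
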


\begin{remark}
    In general, the set $\{\psi_{k,t}(r) \mid 1 \leq r \leq t\}$ is not linearly independent over $\Q$, even for $k,t$ coprime. This can be seen via the following example, from \cite{murty}:
    \begin{align*}
        \psi(1/4) - 3\psi(1/2) + \psi(3/4) + \psi(1) &= 0.
    \end{align*}
    Scaling by $\frac{1}{1- 1/k}$ for any $k$ coprime to $4$ yields
    \[
        \psi_{k,4}(1) - 3\psi_{k,4}(2) + \psi_{k,4}(3) + \psi_{k,4}(4) = 0,
    \]
    since $k^{-1}r \equiv r$ modulo $4$ for any $1 \leq r \leq 4$.
\end{remark}
In qualitative terms, \Cref{thm:no-ties} states that there are no  ``ties'' (i.e., equalities) between different congruence classes in the second-order term of our asymptotic. The techniques we use to prove \Cref{thm:no-ties} are inspired by the work of Gun, Murty, and Rath concerning the linear independence of the set $\{\psi(a/t) \mid \gcd(a,t) = 1\}$ over different number fields \cite{murty}. In particular, we make use of a connection between $L$-functions and linear combinations of values of the digamma function at rational points developed by Murty and Saradha in \cite{murtyTrans}. Furthermore, we prove a more general theorem concerning the structure of linear relations among values of the digamma function at rational points, using the same techniques.

\begin{theorem}\label{thm:const-lin-comb}
    Let $K$ be some finite extension of $\Q$ over which the $t$-th cyclotomic polynomial is irreducible. For a function $f : \ZZ/t\ZZ \to K$, let $A_f \coloneqq \{\gcd(n,t) \mid f(n) \neq 0\}$. Now let $f : \ZZ/t\ZZ \to K$ be some nonzero function such that
    \begin{align}
        \sum_{n=1}^t f(n)\psi(n/t) = 0 && \sum_{n=1}^t f(n) = 0.\label{eq:f-vanishes}
    \end{align}
    Then we have that $A_f \nsubseteq \{a,t\}$ for any $a$ dividing $t$.
    
    Furthermore, if $A_f$ is minimal with respect to inclusion among functions satisfying \cref{eq:f-vanishes}, then $f(n) = f(hn)$ for all $n$ and for any $h \in (\ZZ/t\ZZ)^\times$ satisfying $h \equiv 1 \pmod{t/a}$ for any $a \in A_f$.
\end{theorem}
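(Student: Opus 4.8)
The plan is to recast both assertions in terms of the $L$-function $L(s,f) \coloneqq \sum_{n\geq 1} f(n)n^{-s}$ attached to the periodic function $f$. Writing $n = mt + b$ and expanding the Hurwitz zeta function $\zeta(s,x) = (s-1)^{-1} - \psi(x) + O(s-1)$ about $s = 1$, one gets $L(s,f) = t^{-s}\big[(s-1)^{-1}\sum_{b} f(b) - \sum_b f(b)\psi(b/t) + O(s-1)\big]$. Thus the two conditions in \cref{eq:f-vanishes} are together equivalent to the single statement $L(1,f) = 0$; this is the connection of Murty and Saradha \cite{murtyTrans} that I will exploit throughout. With this dictionary, the first assertion becomes: if $f \neq 0$ and $L(1,f) = 0$, then $A_f \nsubseteq \{a,t\}$ for every $a \mid t$.

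For the first assertion I would argue by contradiction and reduce to the theorem of Baker, Birch, and Wirsing \cite{nonvanishing}. Suppose $f \neq 0$, $L(1,f) = 0$, and $A_f \subseteq \{a,t\}$ for some proper divisor $a \mid t$ (the case $a = t$ is immediate, since then $f$ is supported at $0$ and $\sum_n f(n) = 0$ forces $f = 0$). Put $t' \coloneqq t/a$. Every $n$ in the support of $f$ is then a multiple of $a$, so I would define $g \colon \ZZ/t'\ZZ \to K$ by $g(m) \coloneqq f(am)$. Using $\gcd(am,t) = a\gcd(m,t')$, one checks that $g$ is supported on residues $m$ with $\gcd(m,t') \in \{1,t'\}$, i.e. $A_g \subseteq \{1,t'\}$; and since $am/t = m/t'$, the two identities in \cref{eq:f-vanishes} for $f$ transcribe verbatim into the same identities for $g$ modulo $t'$, that is $L(1,g) = 0$. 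Moreover $\Phi_{t'}$ is irreducible over $K$: irreducibility of $\Phi_t$ gives $\Gal(K(\zeta_t)/K) \cong (\ZZ/t\ZZ)^\times$ by restriction, and the subgroup fixing $K(\zeta_{t'})$ is the kernel of $(\ZZ/t\ZZ)^\times \twoheadrightarrow (\ZZ/t'\ZZ)^\times$, whence $[K(\zeta_{t'}):K] = \varphi(t')$. Since $g$ takes values in $K$ and is nonzero (as $f \neq 0$ is supported on multiples of $a$), this contradicts \cite{nonvanishing}, which forces such a $g$ to vanish.

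For the furthermore clause I would use minimality of $A_f$ together with the multiplicative translation $f_h(n) \coloneqq f(hn)$ for $h \in (\ZZ/t\ZZ)^\times$. The crux is the following stability lemma: if $L(1,f) = 0$ then $L(1,f_h) = 0$ for every unit $h$. Granting this, fix $a \in A_f$ and $h \equiv 1 \Mod{t/a}$, and set $g \coloneqq f_h - f$. Then $\sum_n g(n) = 0$ and, by the lemma and linearity, $L(1,g) = 0$, so $g$ again satisfies \cref{eq:f-vanishes}. For any $n$ with $\gcd(n,t) = a$, writing $n = am$ gives $hn - n = am(h-1) \equiv 0 \Mod t$ because $(t/a) \mid (h-1)$, so $g$ vanishes identically on this residue class; hence $A_g \subseteq A_f \setminus \{a\} \subsetneq A_f$. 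Minimality of $A_f$ now forces $g = 0$, i.e. $f(hn) = f(n)$ for all $n$, which is exactly the claimed symmetry.

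The main obstacle is the stability lemma, and this is where I expect the real content to lie. The plan is to prove it via the logarithmic form of the Murty--Saradha identity: after imposing $\sum_n f(n) = 0$, the sum $\sum_{n=1}^t f(n)\psi(n/t)$ can be written as a $\Qbar$-linear combination $\sum_{n=1}^{t-1}\hat f(n)\log(1 - \zeta_t^n)$ of logarithms of algebraic numbers, where $\hat f(n) = \sum_m f(m)\zeta_t^{-mn}$, and a direct computation gives $\widehat{f_h}(n) = \hat f(h^{-1}n)$. Thus $L(1,f) = 0$ and $L(1,f_h) = 0$ are, respectively, the vanishing of such a linear combination of the $\log(1 - \zeta_t^n)$ and of its $h$-translate. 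By Baker's theorem --- the transcendence input underlying \cite{nonvanishing} --- every such relation is induced by the lattice of multiplicative relations among the algebraic numbers $1 - \zeta_t^n$; since this lattice is permuted by the Galois substitution $\zeta_t \mapsto \zeta_t^h$, the $h$-translate of a relation is again a relation, giving $L(1,f_h) = 0$. The delicate point is the branch ambiguity (the $2\pi i$ terms), which must be tracked so that the Galois descent of the relation is legitimate; this is the step I expect to require the most care.
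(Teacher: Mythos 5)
Your proposal is correct and follows essentially the same route as the paper: the same substitution $g(m) = f(am)$ reducing the first claim to Baker--Birch--Wirsing's nonvanishing theorem (your Galois-theoretic check that the $(t/a)$-th cyclotomic polynomial stays irreducible over $K$ is a detail the paper only asserts), and the same difference-function-plus-minimality argument $g = f_h - f$ for the second claim. The only divergence is your final step: the paper obtains your ``stability lemma'' by directly citing Lemma~4 of \cite{nonvanishing} (valid here since $f$ takes values in $K$, which is fixed by the relevant automorphisms), so the Baker's-theorem argument you sketch --- which is essentially Baker, Birch, and Wirsing's own proof, branch ambiguities and all --- does not need to be redone.
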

By translating these questions into the setting of nonvanishing of certain $L$-functions, we are able to apply arguments of Baker, Birsch, and Wirsing from \cite{nonvanishing} in order to reduce \Cref{thm:no-ties,thm:const-lin-comb} to simple facts about the action of $(\ZZ/t\ZZ)^\times$ (i.e., the group of units in $\ZZ/t\ZZ$) on $\ZZ/t\ZZ$. 

\begin{remark}
    \Cref{thm:no-ties} follows via a short argument from \Cref{thm:const-lin-comb}. However, for ease of exposition, we prove \Cref{thm:no-ties} independently, as here the argument can be framed more cleanly using the language of equivalence relations.
\end{remark}

For the sake of convenience, we will call an equality $\psi_{k,t}(r) = \psi_{k,t}(s)$ a ``tie'' between $r,s$ modulo $t$ in $k$-indivisible partitions. Our goal is to show that any such tie implies that $r = s$. The paper is organized as follows. In \Cref{sec:ties-propagate} we detail the relationship between $L$-functions and ties, using it to prove key properties that ties must satisfy if they exist. In \Cref{sec:no-ties}, we use these properties to prove \Cref{thm:no-ties}. Using the same techniques, we prove \Cref{thm:const-lin-comb} in \Cref{sec:lin-comb}. We also rephrase Baker, Birch, and Wirsing's fundamental lemma in terms of the modern theory of Galois representations in \Cref{sec:galois}. Finally, we suggest future problems concerning the behavior of $\psi_{k,t}(r)$ in \Cref{sec:future}.

\section*{Acknowledgements}

The authors were participants in the 2022 UVA REU in Number Theory. They would like to thank Ken Ono, the director of the UVA REU in Number Theory, their graduate student mentor William Craig, and Steven J. Miller for their support and helpful comments. They would also like to thank their colleagues at the UVA REU for their encouragement and support. They are grateful for the support of grants from the National Science Foundation (DMS-2002265, DMS-2055118, DMS-2147273), the National Security Agency (H98230-22-1-0020), and the Templeton World Charity Foundation.

\section*{Data Availability}

The authors implemented a program in Mathematica to generate the figures found in \Cref{sec:future}. This program can be obtained from GitHub at
\begin{center}
    \href{https://github.com/FayeAlephNil/KRegularBiases}{https://github.com/FayeAlephNil/KRegularBiases}
\end{center}
or, upon reasonable request, from the authors.

\section{Propagation of Ties}\label{sec:ties-propagate}

We wish to show that there are no ties in the second order term of our asymptotic.
Before we begin, we will rephrase the question into one about equivalence relations on $\ZZ/t\ZZ$, as this notation will be quite useful for us.
\begin{definition}
    Let $k,t$ be coprime. For any $1 \leq r,s \leq t$, we say that $r \sim_{k,t} s$ provided that
    \begin{align*}
        \psi_{k,t}(r) = \psi_{k,t}(s).
    \end{align*}
    We may view $r,s$ as elements of $\ZZ/t\ZZ$, and thus this equivalence relation as a relation on $\ZZ/t\ZZ$. We use the notation $[r]_{k,t}$ for the equivalence class of $r$.
\end{definition}
Our goal is to show that $\sim_{k,t}$ is trivial, that is, $r \sim_{k,t} s$ implies $r=s$. To show \Cref{thm:no-ties}, we must first explore the properties of $\sim_{k,t}$. From our previous work in \cite{kindivis}, we already know that $[1]_{k,t} = \{1\}$. Our primary technique will be to reduce to this fact; namely, we will be able to derive the theorem from the following three properties of $\sim_{k,t}$.
\begin{lemma}\label{lemma:ties-properties}
    Let $k,t$ be coprime and $1 \leq r,s \leq t$, then we have the following.
    \begin{enumerate}
        \item\label{item:common-div} If $r,s,t$ share some common factor $x$, then $\frac{r}{x} \sim_{k,t/x} \frac{s}{x}$.
        \item\label{item:1-common} If $1 \sim_{k,t} r$, then $r = 1$.
        \item\label{item:twist} If $r \sim_{k,t} s$ and $h$ is coprime to $t$, then $rh \sim_{k,t} sh$.
    \end{enumerate}
\end{lemma}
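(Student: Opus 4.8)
The plan is to translate each tie into the vanishing of a special value of a Dirichlet $L$-function and then exploit the Galois symmetry of such values. Given a pair $1 \le r,s \le t$, I would package the difference $\psi_{k,t}(r) - \psi_{k,t}(s)$ into a single function $f_{r,s} : \ZZ/t\ZZ \to \Q$ whose only nonzero values are $f_{r,s}(r) = -1$, $f_{r,s}(s) = 1$, $f_{r,s}(\rho_{k,t}(r)) = 1/k$, and $f_{r,s}(\rho_{k,t}(s)) = -1/k$, with coefficients summed whenever these four residues are not distinct. Then $\sum_{n=1}^t f_{r,s}(n) = 0$ and $\sum_{n=1}^t f_{r,s}(n)\psi(n/t) = \psi_{k,t}(r) - \psi_{k,t}(s)$, so $r \sim_{k,t} s$ if and only if $\sum_n f_{r,s}(n)\psi(n/t) = 0$. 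By the formula of Murty and Saradha \cite{murtyTrans} (valid precisely because $\sum_n f_{r,s}(n)=0$), this is equivalent to $L(1, f_{r,s}) = 0$. With this dictionary in hand, the common-divisor property and the $1 \sim r$ property are comparatively soft, while the twist property is the crux.

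For the common-divisor property I would prove the sharper pointwise identity $\psi_{k,t}(r) = \psi_{k,t/x}(r/x)$ whenever $x \mid \gcd(r,t)$. The first term is immediate since $r/t = (r/x)/(t/x)$. For the second, observe that $k\,\rho_{k,t}(r) \equiv r \equiv 0 \pmod x$ and $\gcd(k,x) = 1$ (as $x \mid t$ and $\gcd(k,t)=1$), whence $x \mid \rho_{k,t}(r)$; dividing $k\,\rho_{k,t}(r) \equiv r \pmod t$ by $x$ shows $\rho_{k,t}(r)/x$ is the representative $\rho_{k,t/x}(r/x)$, so $\psi(\rho_{k,t}(r)/t) = \psi(\rho_{k,t/x}(r/x)/(t/x))$. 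Applying the identity to both $r$ and $s$ yields the common-divisor property. The $1 \sim r$ property is exactly the fact $[1]_{k,t} = \{1\}$ established in \cite{kindivis}, so nothing new is required.

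The twist property is where I expect the real work. Multiplication by a unit $h \in (\ZZ/t\ZZ)^\times$ sends $f_{r,s}$ to its translate $f^{(h)}_{r,s}(n) := f_{r,s}(h^{-1}n)$, and since $\rho_{k,t}(hr) \equiv k^{-1}hr \equiv h\,\rho_{k,t}(r) \pmod t$, the four supporting residues of $f_{r,s}$ are carried exactly onto those of $f_{hr,hs}$; thus $f^{(h)}_{r,s} = f_{hr,hs}$, and $hr \sim_{k,t} hs$ if and only if $L(1, f^{(h)}_{r,s}) = 0$. So the property reduces to the implication $L(1, f_{r,s}) = 0 \Rightarrow L(1, f^{(h)}_{r,s}) = 0$. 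To relate the two values I would expand them in logarithms: writing $\hat f(\ell) = \tfrac1t\sum_n f_{r,s}(n)\zeta_t^{-\ell n}$ for the finite Fourier coefficients and using $\sum_{n\ge1}\zeta_t^{\ell n}/n = -\log(1-\zeta_t^\ell)$, one finds $L(1, f_{r,s}) = -\sum_{\ell=1}^{t-1}\hat f(\ell)\log(1-\zeta_t^\ell)$, the $\ell=t$ term dropping out since $\hat f(t) = \tfrac1t\sum_n f_{r,s}(n) = 0$. A direct reindexing gives $\widehat{f^{(h)}}(\ell) = \sigma_h(\hat f(\ell))$, where $\sigma_h \in \Gal(\Q(\zeta_t)/\Q)$ is the automorphism $\zeta_t \mapsto \zeta_t^h$; hence $L(1, f^{(h)}_{r,s}) = -\sum_{\ell=1}^{t-1}\sigma_h(\hat f(\ell))\log(1-\zeta_t^\ell)$. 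In words, passing from $L(1, f_{r,s})$ to $L(1, f^{(h)}_{r,s})$ applies $\sigma_h$ to the coefficients of a fixed linear form in the numbers $\log(1-\zeta_t^\ell)$.

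The main obstacle is that $\sigma_h$ is only a field automorphism of $\Q(\zeta_t)$, whereas the $\log(1-\zeta_t^\ell)$ are transcendental, so there is no literal Galois action on $L(1,f_{r,s})$ to push through. This is exactly where I would invoke the theorem of Baker, Birch, and Wirsing \cite{nonvanishing}, whose engine is Baker's theorem on linear forms in logarithms. The consequence I need is that the space $V \subseteq \Qbar^{\,t-1}$ of tuples $(c_\ell)$ with $\sum_{\ell=1}^{t-1} c_\ell\log(1-\zeta_t^\ell) = 0$ is defined over $\Q$; equivalently, every $\Qbar$-linear relation among the $\log(1-\zeta_t^\ell)$ is a $\Qbar$-combination of $\Q$-rational ones, after including $\log(-1) = \pi i$ to absorb relations arising from arguments, as in \cite{nonvanishing}. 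Granting this, $V$ is stable under the coordinatewise action of $\Gal(\Qbar/\Q)$, since a $\Q$-rational basis of $V$ is fixed coordinatewise. Because $(\hat f(\ell))_\ell \in V$ by $L(1,f_{r,s})=0$, applying any extension of $\sigma_h$ to $\Qbar$ gives $(\sigma_h\hat f(\ell))_\ell \in V$, i.e. $L(1, f^{(h)}_{r,s}) = 0$, whence $hr \sim_{k,t} hs$. The only delicate points I anticipate are the constant-term bookkeeping in the log expansion (controlled by $\sum_n f_{r,s}(n) = 0$) and the careful application of Baker's theorem guaranteeing that $V$ is $\Q$-rational.
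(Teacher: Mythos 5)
Your proposal is correct and follows the same overall architecture as the paper: the same auxiliary function $f_{r,s}$ supported on $r,s,k^{-1}r,k^{-1}s$, the same Murty--Saradha identity converting a tie into the statement $L(1,f_{r,s})=0$, the same citation of \cite{kindivis} for property (2), and the same modular-arithmetic computation for property (1). (Your treatment of (1) is in fact stated more precisely than the paper's sketch: the paper records the key identity as $\rho_{k,t}(r) = \rho_{k,t/x}(r/x)$, whereas the correct statement, which you give, is that $x \mid \rho_{k,t}(r)$ and $\rho_{k,t}(r)/x = \rho_{k,t/x}(r/x)$, so that the two digamma arguments agree after dividing by $t$ and $t/x$ respectively.) The one genuine difference is in property (3): the paper cites Lemma~4 of Baker, Birch, and Wirsing \cite{nonvanishing} as a black box --- if $L(1,f)=0$ then $f^\sigma(n) = \sigma f(hn)$ satisfies $L(1,f^\sigma)=0$ --- and then simply computes that $f^\sigma = f_{h^{-1}r,h^{-1}s}$, finishing by the group structure of $(\ZZ/t\ZZ)^\times$; you instead reconstruct the proof of that lemma, expanding $L(1,f) = -\sum_{\ell=1}^{t-1}\hat f(\ell)\log(1-\xi_t^\ell)$ via finite Fourier analysis (the mean-zero condition killing the divergent $\ell \equiv 0$ term), observing that the unit twist acts on Fourier coefficients by the cyclotomic Galois automorphism $\sigma_h$, and invoking Baker's theorem on linear forms in logarithms to conclude that the $\Qbar$-relation space among the $\log(1-\xi_t^\ell)$ is spanned by its $\Q$-rational vectors and hence is stable under the coordinatewise Galois action. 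This is precisely the engine inside the cited Lemma~4, so the underlying mathematics coincides; what your route buys is self-containedness (dependence only on Baker's theorem rather than on the packaged lemma), at the cost of the bookkeeping you correctly flag --- the justification of the Fourier expansion and the $\Q$-rationality of the relation space. Either write-up is a complete and valid proof of the lemma.
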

The first property follows from the fact that $\psi_{k,t}(r) = \psi_{k,t/x}(r/x)$ when $x$ divides both $r,t$. This is an immediate consequence of computing that $\rho_{k,t}(r) = \rho_{k,t/x}(r/x)$, which can be shown by elementary modular arithmetic (recall that $1 \leq \rho_{k,t}(r) \leq r$ is a representative of $k^{-1}r$ modulo $t$). The second property was proved in \cite{kindivis}, as noted before. To prove the third property, we will use the connection to $L$-functions developed by Murty and Saradha in \cite{murtyTrans}.

\subsection{Ties and the Vanishing of \texorpdfstring{$L$}{L}-functions}

To understand the existence of ties, we first use a result of Murty and Saradha (see \cite{murtyTrans}) to relate this to the nonvanishing of a particular Dirichlet-like $L$-function. Then, we will apply a key lemma appearing in the proof of a nonvanishing theorem due to Baker, Birch, and Wirsing concerning such $L$-functions (see \cite{nonvanishing}). Our exposition of the following connection follows that of Murty, Gun and Rath in \cite{murty}.

To begin, let $f$ be some periodic arithmetic function with period $t$. Throughout, $f$ takes algebraic values, and in fact in our application the values of $f$ will be rational. To this function, we can associate an $L$-series
\[
    L(s,f) = \sum_{n=1}^\infty \frac{f(n)}{n^s},
\]
which converges for $\Re s > 1$. It is well known that $L(s,f)$ may analytically continued to the entire complex plane, apart from $s=1$, where $L(s,f)$ may have a simple pole. The residue at this pole is given by $\sum_{r=1}^t f(r)$. Furthermore, if this residue is zero, then the series
\begin{align*}
    \sum_{n=1}^\infty \frac{f(n)}{n}
\end{align*}
converges to $L(1,f)$. As shown by Murty and Saradha \cite{murtyTrans} this can be related to sums of the digamma function at rational arguments, since in this case
\begin{align}
    L(1,f) = -\frac{1}{t}\sum_{r=1}^t f(r)\psi(r/t).\label{eq:murtyTrans}
\end{align}
This provides our candidate choice of a function $f$. Let $\mathbbm{1}_r : \ZZ/t\ZZ \to \{0,1\}$ be the indicator function for the congruence class $r$ modulo $t$. Define
\begin{align}
    f_{r,s}(n) &\coloneqq -\mathbbm{1}_r(n) + \mathbbm{1}_s(n) + \frac{\mathbbm{1}_{k^{-1}r}(n)}{k} - \frac{\mathbbm{1}_{k^{-1}s}(n)}{k}. \label{eq:f-rs}
\end{align}
Then by \cref{eq:murtyTrans}, $L(1,f_{r,s}) = 0$ if and only if $r \sim_{k,t} s$, as 
\[
    L(1,f) = -\frac{1}{t}\left(\psi_{k,t}(r) - \psi_{k,t}(s)\right).
\]
As Baker, Birch, and Wirsing's result concerns series of the form $\sum_{n=1}^\infty \frac{f(n)}{n}$, we've thus converted our problem into these terms. Using their notation, let $F_t$ be the collection of such functions $f$ which are periodic with period $t$ and taking algebraic values for which
\[
    \sum_{n=1}^\infty \frac{f(n)}{n} = 0.
\]
For convenience, we also let $\Qbar$ denote the algebraic numbers and $\xi = e^{2\pi i/t}$. We are now prepared to state the key lemma.
\begin{lemma}[Baker, Birch, Wirsing {\cite[Lemma~4]{nonvanishing}}]\label{lemma:nonvanishing}
    Let $f$ be periodic with period $t$ and suppose $L(1,f) = 0$. Furthermore, let $\sigma$ be any automorphism of $\Qbar$, and let $h$ the integer defined modulo $t$ by $\sigma^{-1}\xi = \xi^h$. Then $f^\sigma(n) \coloneqq \sigma f(hn)$ also satisfies $L(1,f^\sigma) = 0$.
\end{lemma}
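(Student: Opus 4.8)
The plan is to pass to the finite Fourier expansion of $f$ over the additive characters of $\ZZ/t\ZZ$ and thereby reduce the statement to the Galois-stability of the space of $\Qbar$-linear relations among the numbers $\log(1-\xi^a)$, a stability that is ultimately governed by Baker's theorem on linear forms in logarithms. First I would write $f(n) = \sum_{a=1}^{t}\hat{f}(a)\xi^{an}$ with $\hat{f}(a) = \frac{1}{t}\sum_{n=1}^t f(n)\xi^{-an}$. Since $L(1,f)=0$ presupposes convergence, the residue $\sum_{r=1}^t f(r)$ of $L(s,f)$ at $s=1$ must vanish; as this residue equals $t\,\hat{f}(t)$ (the $a=t$, i.e. trivial-character, coefficient), we get $\hat{f}(t)=0$. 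Summing term by term, which is justified by Abel's theorem together with $\sum_{n\geq 1}\xi^{an}/n = -\log(1-\xi^a)$ for $\xi^a\neq 1$, yields the $\Qbar$-linear relation
\[
    L(1,f) = -\sum_{a=1}^{t-1}\hat{f}(a)\log(1-\xi^a) = 0
\]
among the logarithms $\ell_a \coloneqq \log(1-\xi^a)$, $1\leq a\leq t-1$.

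Next I would compute the Fourier coefficients of $f^\sigma$. Because $\sigma$ permutes the primitive $t$-th roots of unity we have $\gcd(h,t)=1$, and since $\sigma^{-1}\xi = \xi^h$ we obtain $\xi^{-an} = \sigma(\xi^{-ahn})$. Applying this inside the definition $f^\sigma(n)=\sigma f(hn)$ and then substituting $m\equiv hn\pmod t$ (a bijection of $\ZZ/t\ZZ$) gives
\[
    \widehat{f^\sigma}(a) = \frac{1}{t}\sum_{n=1}^t\sigma\bigl(f(hn)\bigr)\xi^{-an} = \sigma\!\left(\frac{1}{t}\sum_{m=1}^t f(m)\xi^{-am}\right) = \sigma\bigl(\hat{f}(a)\bigr).
\]
In particular $\widehat{f^\sigma}(t) = \sigma(\hat{f}(t)) = 0$, so the residue of $L(s,f^\sigma)$ at $s=1$ vanishes, $L(s,f^\sigma)$ is holomorphic there, and the same Fourier computation produces $L(1,f^\sigma) = -\sum_{a=1}^{t-1}\sigma(\hat{f}(a))\,\ell_a$. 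Thus the lemma is reduced to showing that the relation $\sum_a \hat{f}(a)\ell_a=0$ remains a relation after applying $\sigma$ to each coefficient.

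Finally, the crux is the transcendence input. I would invoke Baker's theorem on linear forms in logarithms of algebraic numbers, whose consequence is that the $\Qbar$-vector space of linear relations among the $\ell_a$ equals $\Qbar\otimes_\Q W$, where $W\subseteq\Q^{\,t-1}$ is the space of $\Q$-linear relations among them; equivalently, the relation space is \emph{defined over $\Q$}. Since $(\hat{f}(a))_a$ lies in $\Qbar\otimes_\Q W$ and $\sigma$ fixes $\Q$ pointwise, the coordinatewise image $(\sigma\hat{f}(a))_a$ lies in the same $\Q$-rational subspace, hence is again a relation; therefore $L(1,f^\sigma)=0$. I expect precisely this step — that $\Qbar$-linear dependence among logarithms of algebraic numbers can arise only from $\Q$-linear dependence — to be the main obstacle and the genuinely deep ingredient, while the finite-Fourier identities above are formal bookkeeping.
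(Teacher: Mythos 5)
This lemma is not proved in the paper at all---it is imported verbatim from Baker, Birch, and Wirsing \cite[Lemma~4]{nonvanishing}---so there is no internal proof to compare yours against. Your argument is correct, and it is essentially the original argument from that reference: the discrete Fourier expansion converts $L(1,f)=0$ into a $\Qbar$-linear relation among the logarithms $\log(1-\xi^a)$, the identity $\widehat{f^\sigma}(a)=\sigma(\hat{f}(a))$ identifies $L(1,f^\sigma)$ with the coordinatewise-conjugated linear form, and Baker's theorem guarantees that the space of $\Qbar$-relations among logarithms of algebraic numbers is spanned by rational relations, hence stable under applying $\sigma$ to coefficients. The one hypothesis you use silently---that $f$ takes algebraic values---is part of the paper's standing assumptions in this section, and it is needed both for $f^\sigma(n)=\sigma f(hn)$ to make sense and for Baker's theorem to apply, so it deserves explicit mention but creates no gap.
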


Equipped with this lemma, we are prepared to prove \cref{item:twist} in \Cref{lemma:ties-properties}.
\begin{proof}[Proof of \cref{item:twist} in \Cref{lemma:ties-properties}]
    Let $r,s \in \ZZ/t\ZZ$ and let $f$ be defined by \cref{eq:f-rs}. Furthermore, fix some $h$ coprime to $t$. By coprimality, we know there is some automorphism $\sigma$ of $\overline{\mathbb{Q}}$ so that $\sigma^{-1}\xi = \xi^h$. Furthermore, since $f$ takes rational values, we see that
    \begin{align*}
        f^\sigma(n) = \sigma f(hn) = f(hn).
    \end{align*}
    We now may compute that, as a function on $\ZZ/t\ZZ$,
    \begin{align*}
        f^\sigma(n) &= -\mathbbm{1}_r(hn) + \mathbbm{1}_s(hn) + \frac{\mathbbm{1}_{k^{-1}r}(hn)}{k} - \frac{\mathbbm{1}_{k^{-1}s}(hn)}{k} \\
        &= -\mathbbm{1}_{h^{-1}r}(n) + \mathbbm{1}_{h^{-1}s}(n) + \frac{\mathbbm{1}_{k^{-1}h^{-1}r}(n)}{k} - \frac{\mathbbm{1}_{k^{-1}h^{-1}s}(n)}{k}.
    \end{align*}
    Since we know $r \sim_{k,t} s$ if and only if $L(1,f) = 0$, this implies $L(1,f^\sigma) =0$ by \Cref{lemma:nonvanishing}. This in turn implies $h^{-1}r \sim_{k,t} h^{-1}s$. Because $(\ZZ/t\ZZ)^\times$ is a group, this is sufficient.
\end{proof}

\section{Proof of \texorpdfstring{\Cref{thm:no-ties}}{Theorem 1.2}}\label{sec:no-ties}

Equipped with the above propagation results, we may begin our proof that $\sim_{k,t}$ is trivial. We will only use the properties of $\sim_{k,t}$ listed in \Cref{lemma:ties-properties}, which we recall here.
\begin{enumerate}
    \item If $x \mid r,s,t$ and $r \sim_{k,t} s$ then $\frac{r}{x} \sim_{k,t/x} \frac{s}{x}$.
    \item $1 \sim_{k,t} s$ implies $s = 1$.
    \item If $h$ is coprime to $t$ and $r \sim_{k,t} s$ then $rh \sim_{k,t} sh$.
\end{enumerate}
We begin with the simplest case, when one of $r,s$ is coprime to $t$.
\begin{lemma}\label{lemma:coprime-no-ties}
    If $r \sim_{k,t} s$ and one of $r,s$ is coprime to $t$, then $r=s$. Furthermore, if $r \sim_{k,t} s$ and $\gcd(r,t) = \gcd(s,t)$ then $r = s$.
\end{lemma}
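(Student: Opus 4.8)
The plan is to deduce both statements purely formally from the three properties of $\sim_{k,t}$ recalled at the start of this section, treating the coprime case as the base case and then reducing the equal-gcd case to it. The key mechanism is property~(3), which makes $(\ZZ/t\ZZ)^\times$ act on ties; this lets any tie involving a unit be transported to a tie involving $1$, where property~(2) pins everything down completely.

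For the first statement, I would assume without loss of generality that $r$ is coprime to $t$, so that $r$ is invertible in $\ZZ/t\ZZ$. Applying property~(3) with $h = r^{-1}$ (which is coprime to $t$) to the tie $r \sim_{k,t} s$ yields $r r^{-1} \sim_{k,t} s r^{-1}$, that is, $1 \sim_{k,t} sr^{-1}$. Property~(2) then forces $sr^{-1} \equiv 1 \Mod t$, i.e.\ $s \equiv r \Mod t$; since $1 \le r, s \le t$ this gives $r = s$.

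For the second statement, I would set $x = \gcd(r,t) = \gcd(s,t)$. Then $x$ divides each of $r$, $s$, and $t$, so property~(1) applies and yields $\frac{r}{x} \sim_{k,t/x} \frac{s}{x}$. The point of dividing out the common gcd is that it renders the residues coprime to the new modulus: $\gcd(r/x, t/x) = \gcd(r,t)/x = 1$, and likewise for $s/x$. Thus $r/x$ is coprime to $t/x$, and applying the already-established first statement over the modulus $t/x$ gives $r/x = s/x$, hence $r = s$.

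I expect no serious obstacle here, as everything is a formal consequence of the three listed properties; the only point requiring a moment's care is the identity $\gcd(r/x, t/x) = 1$ when $x = \gcd(r,t)$, which is the standard fact that a pair becomes coprime after dividing by its greatest common divisor. I would also be careful about the logical order: the first statement must be proved first (directly, via properties~(2) and~(3)), since the proof of the second statement invokes it after performing the modulus reduction.
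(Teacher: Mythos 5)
Your proof is correct and follows exactly the paper's argument: twist by $r^{-1}$ via property~(3) to reduce to $1 \sim_{k,t} sr^{-1}$ and invoke property~(2), then handle the equal-gcd case by dividing out the common gcd via property~(1) and applying the coprime case to the reduced modulus. The paper's own proof is just a terser version of the same reasoning.
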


\begin{proof}
    Without loss of generality, suppose that $r$ is coprime to $t$. Then by \cref{item:twist}, $1 \sim_{k,t} r^{-1}s$, and so $s = r$ since $[1]_{k,t} = \{1\}$. The latter statement follows by \cref{item:common-div}, by reducing with respect to $\gcd(r,t) = \gcd(s,t)$.
\end{proof}
We now
% begin in earnest.
handle the general case.
For the sake of contradiction, take $r \neq s$ within $\ZZ/t\ZZ$ satisfying $r \sim_{k,t} s$.
Furthermore, take $t$ to be the smallest $t$ where this occurs.
For convenience, let $a = \gcd(r,t) > 1, b = \gcd(s,t) > 1$ and $r = ar', s = bs'$
($a = 1$ or $b = 1$ is the coprime case above).
If $x \mid a,b$ and $x > 1$, then we may reduce $r,s,t$ by $x$ via \cref{item:common-div} to get a tie with smaller $t$.
% Thus, by infinite descent, we may take $\gcd(a,b) = 1$.
Since we took the smallest $t$, this is a contradiction. Thus we must have $\gcd(a, b) = 1$.
At least one of $a,b > 1$ is odd, and so without loss of generality, let $p \mid a$, with $p > 2$.

Our
% primary goal
aim here
is to build some $h$ coprime to $t$ so that $rh \equiv r \pmod t$, and $sh \not\equiv s \pmod t$. We will then have that $sh \sim_{k,t} s$, but $\gcd(sh,t) = \gcd(s,t)$ since $h$ is coprime to $t$. This contradicts \Cref{lemma:coprime-no-ties}. As this fact is purely elementary number theory, we separate it into its own lemma.
\begin{lemma}
    Let $r,s \in \ZZ/t\ZZ$ with $\gcd(r,t), \gcd(s,t) > 1$ be coprime. Furthermore, let $p > 2$ be some prime so that $p \mid \gcd(r,t)$. Then there is some $h$ coprime to $t$ so that $rh \equiv r \pmod t$, but $sh \not\equiv s \pmod t$.
\end{lemma}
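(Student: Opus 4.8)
The plan is to convert the two multiplicative conditions on $h$ into additive congruence conditions, and then to build $h$ prime by prime via the Chinese Remainder Theorem. First I would rewrite the stabilizing condition: setting $a = \gcd(r,t)$, the standard fact $\gcd(r/a, t/a) = 1$ gives that $rh \equiv r \pmod t$, i.e. $t \mid r(h-1)$, holds if and only if $h \equiv 1 \pmod{t/a}$. Likewise, with $b = \gcd(s,t)$, we have $sh \equiv s \pmod t$ if and only if $h \equiv 1 \pmod{t/b}$. So the task reduces to producing an $h$ with $\gcd(h,t) = 1$, $h \equiv 1 \pmod{t/a}$, and $h \not\equiv 1 \pmod{t/b}$.

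Next I would localize at the distinguished prime $p$. Let $p^e \| t$ and $p^f \| a$; since $p \mid a$ we have $f \geq 1$, and since $\gcd(a,b) = 1$ we have $p \nmid b$, so the full prime power $p^e$ divides $t/b$. The conditions at $p$ therefore read $h \equiv 1 \pmod{p^{e-f}}$ (to preserve $r$) together with $h \not\equiv 1 \pmod{p^e}$, and this last inequality alone forces $h \not\equiv 1 \pmod{t/b}$ because $p^e \mid t/b$. The key counting observation is that the residues $h \bmod p^e$ satisfying $h \equiv 1 \pmod{p^{e-f}}$ number exactly $p^f$, of which only $h \equiv 1 \pmod{p^e}$ must be excluded, so a genuine choice survives; moreover when $e > f$ every such residue is automatically a unit (as then $h \equiv 1 \pmod p$), while when $e = f$ the congruence is vacuous and I instead invoke that $(\ZZ/p^e\ZZ)^\times$ has order $p^{e-1}(p-1) \geq 2$ to pick a unit different from $1$.

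At each remaining prime power $q^{e_q} \| t$ I would simply impose $h \equiv 1 \pmod{q^{e_q}}$, which is a unit and satisfies the corresponding $h \equiv 1$ requirement coming from $t/a$ trivially. Assembling these local choices through the Chinese Remainder Theorem yields a single $h$ modulo $t$ that is coprime to $t$, congruent to $1$ modulo $t/a$, yet fails $h \equiv 1 \pmod{p^e}$ and hence fails $h \equiv 1 \pmod{t/b}$. This $h$ is exactly what the lemma demands.

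The only substantive obstacle is guaranteeing the local choice at $p$, and this is precisely where the hypothesis $p > 2$ enters: in the boundary case $e = f$ with $p = 2$ the unit group $(\ZZ/2\ZZ)^\times$ is trivial, so no admissible $h \neq 1$ exists, whereas for $p \geq 3$ one always has at least $p - 2 \geq 1$ surviving choices. Everything beyond this local verification is a routine CRT assembly and the elementary divisibility bookkeeping indicated above.
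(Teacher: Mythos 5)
Your proof is correct, but it takes a genuinely different route from the paper's. The paper writes down a single explicit candidate, $h = \frac{ty}{p} + 1$ with $y \not\equiv 0 \pmod p$: the congruence $rh \equiv r \pmod t$ is then immediate from $p \mid r$, the non-congruence $sh \not\equiv s \pmod t$ follows because $p \mid sy$ would force $p \mid \gcd(s,t)$ or $p \mid y$, and coprimality of $h$ to $t$ is arranged by a pigeonhole count on $y$ modulo $p$ (at most one bad residue among $p-1 > 1$ nonzero ones, which is where $p > 2$ enters). You instead first characterize the stabilizer: $rh \equiv r \pmod t$ if and only if $h \equiv 1 \pmod{t/a}$ with $a = \gcd(r,t)$, and likewise for $s$, and then assemble $h$ prime by prime via CRT, with a case split at $p$ according to whether $p^f \| a$ exhausts the full power $p^e \| t$. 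Both arguments are complete and elementary. The paper's is shorter and avoids both the iff-reformulation and CRT; note that its $h$ satisfies the stronger condition $h \equiv 1 \pmod{t/p}$, so it is in effect a particular instance of your framework. Your version buys structural clarity: it identifies the stabilizer of $r$ in $(\ZZ/t\ZZ)^\times$ as an explicit congruence subgroup, reduces the lemma to the transparent statement that this subgroup is not contained in the stabilizer of $s$, and pinpoints exactly where $p > 2$ is indispensable (only the boundary case $e = f = 1$ at $p = 2$ genuinely fails, so your argument proves slightly more than the lemma asserts). One small presentational caveat: when you say the condition at $p$ ``to preserve $r$'' is $h \equiv 1 \pmod{p^{e-f}}$, you should make explicit (as you implicitly do via the choices at the other primes) that the global requirement $h \equiv 1 \pmod{t/a}$ is being split into its local components, since the conditions you impose at primes $q \neq p$ are strictly stronger than the $q$-parts of $t/a$ demand; this is harmless but worth a sentence.
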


\begin{proof}
    We set
    \begin{align*}
        h \coloneqq \frac{ty}{p} + 1,
    \end{align*}
    where $y$ is some integer which is nonzero modulo $p$. We claim we can choose $y$ modulo $p$ so that $h$ satisfies the desired properties. First we see that
    \begin{align*}
        rh &= \frac{rty}{p} + r \equiv r \pmod t \\
        sh &= \frac{sty}{p} + s \not\equiv s \pmod t.
    \end{align*}
    The first statement follows since $p \mid r$, so $t \mid rty/p$. The second follows since if $t \mid sty/p$, we have that $p \mid sy$, but if $p \mid sy$ then $p \mid \gcd(s,t)$ or $p \mid y$. In the first case, $\gcd(r,t)$ and $\gcd(s,t)$ are not coprime, and in the second, $y \equiv 0 \pmod p$.
    
    Now we must show that $y$ may be chosen such that $h$ is coprime to $t$. Let $p' \neq p$ be some prime dividing $t$. Then $h \equiv 1 \pmod{p'}$. Modulo $p$, we find that $h \equiv \frac{ty}{p} + 1$, thus it suffices to pick some $y \not\equiv 0 \pmod p$ so that $ty/p \not\equiv -1 \pmod p$. There is at most one $y$ modulo $p$ so that $ty/p \equiv -1 \pmod p$, and there are $p-1 > 1$ nonzero elements of $\ZZ/p\ZZ$. Thus, pick a $y \not\equiv 0 \pmod p$ with $ty/p \not\equiv -1 \pmod p$. Such a $y$ has that $h$ is not divisible by any prime dividing $t$, and so $h,t$ are coprime as desired.
\end{proof}
With this, we have \Cref{thm:no-ties} by the argument above.\qed

\section{Proof of \texorpdfstring{\Cref{thm:const-lin-comb}}{Theorem 1.2}}\label{sec:lin-comb}

Fix some finite extension $K$ of $\Q$ over which the $t$-th cyclotomic polynomial is irreducible. Furthermore, fix some function $f : \ZZ/t\ZZ \to K$ satisfying
\begin{align}
    \sum_{n=1}^t f(n)\psi(n/t) = 0 && \sum_{n=1}^t f(n) = 0.\label{eq:f-vanishes-2}
\end{align}
Via the connection to $L$-functions, this is equivalent to the condition that
\[
    L(1,f) = \sum_{n=1}^\infty \frac{f(n)}{n} = 0.
\]
As before, let $\xi_t \coloneqq e^{2\pi i/t}$ be a primitive $t$-th root of unity. By \cite[Theorem~1]{nonvanishing}, we cannot have $A_f \subseteq \{1,t\}$. Similarly, we have the following more general result.
\begin{lemma}
    $A_f \nsubseteq \{a,t\}$ for any $a \in \ZZ/t\ZZ$.
\end{lemma}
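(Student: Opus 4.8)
The plan is to reduce the general case $A_f \subseteq \{a,t\}$ to the already-resolved case $A_f \subseteq \{1,t\}$ (namely \cite[Theorem~1]{nonvanishing}) by rescaling the modulus from $t$ down to $t/a$. First I would dispose of the degenerate cases. Since every element of $A_f$ is a divisor of $t$, if $a \nmid t$ or $a = t$ then $A_f \subseteq \{a,t\}$ forces $A_f \subseteq \{t\}$, so $f$ is supported on the residue class $0$; the condition $\sum_{n=1}^t f(n) = 0$ then gives $f(t) = 0$, so $f \equiv 0$, contradicting nonvanishing. The case $a = 1$ is precisely \cite[Theorem~1]{nonvanishing}. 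Thus I would assume $1 < a < t$ with $a \mid t$, suppose for contradiction that $A_f \subseteq \{a,t\}$, observe that $f$ is then supported on multiples of $a$, set $d = t/a$, and define $g \colon \ZZ/d\ZZ \to K$ by $g(m) = f(am)$.

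Next I would check that $g$ inherits all the hypotheses. It is nonzero, since $f \neq 0$ is supported on multiples of $a$, so some $n_0 = am_0$ has $g(m_0) = f(n_0) \neq 0$; and $g$ has period $d$ because $f$ has period $t$. For the vanishing, because $f$ is supported on multiples of $a$ we have $L(1,f) = \sum_{a \mid n} f(n)/n = \frac{1}{a}\sum_{m \geq 1} g(m)/m = \frac{1}{a}L(1,g)$, so $L(1,g) = a\,L(1,f) = 0$; equivalently, using $\psi(am/t) = \psi(m/d)$ one sees that $g$ satisfies \cref{eq:f-vanishes-2} with modulus $d$. Finally, the identity $\gcd(am,t) = \gcd(am,ad) = a\gcd(m,d)$ shows that $g(m) \neq 0$ forces $a\gcd(m,d) \in \{a,t\}$, i.e. $\gcd(m,d) \in \{1,d\}$; hence $A_g \subseteq \{1,d\}$.

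The one point requiring care — and the main obstacle — is that \cite[Theorem~1]{nonvanishing} requires the $d$-th cyclotomic polynomial to be irreducible over $K$, whereas we only assumed this for the $t$-th. I would supply this by a Galois-descent argument. Since $\Phi_t$ is irreducible over $K$, the abelian extension $K(\xi_t)/K$ has $\Gal(K(\xi_t)/K) \cong (\ZZ/t\ZZ)^\times$ via the cyclotomic character. Writing $\xi_d = \xi_t^a$, an automorphism acting by $\xi_t \mapsto \xi_t^c$ fixes $\xi_d$ exactly when $ca \equiv a \Mod{t}$, i.e. $c \equiv 1 \Mod{d}$; thus the fixing subgroup is the kernel of the surjection $(\ZZ/t\ZZ)^\times \to (\ZZ/d\ZZ)^\times$, giving $\Gal(K(\xi_d)/K) \cong (\ZZ/d\ZZ)^\times$. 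Hence $[K(\xi_d):K] = \varphi(d) = \deg \Phi_d$, so $\Phi_d$ is irreducible over $K$ (and a fortiori over the subfield generated by the values of $g$).

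With this in hand, $g$ is a nonzero function of period $d$ taking values in $K$, satisfying $L(1,g) = 0$ and $A_g \subseteq \{1,d\}$, with $\Phi_d$ irreducible over $K$; this is exactly the situation ruled out by the base case \cite[Theorem~1]{nonvanishing}, so $g \equiv 0$, contradicting $g(m_0) \neq 0$. This contradiction would establish $A_f \nsubseteq \{a,t\}$. I expect the rescaling identities (periodicity of $g$ and $\gcd(am,t) = a\gcd(m,d)$) to be routine, and the genuine content to lie in the descent of cyclotomic irreducibility to the divisor $d = t/a$.
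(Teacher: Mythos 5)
Your proposal is correct and follows essentially the same route as the paper: assume $A_f \subseteq \{a,t\}$, rescale the modulus by defining $g(m) = f(am)$ on $\ZZ/(t/a)\ZZ$, verify that $g$ inherits the vanishing conditions with $A_g \subseteq \{1, t/a\}$, and invoke \cite[Theorem~1]{nonvanishing} for a contradiction. The only differences are that you treat the degenerate cases $a \nmid t$, $a = t$, $a = 1$ explicitly and spell out the Galois-theoretic descent of cyclotomic irreducibility from $t$ to $t/a$, which the paper asserts as ``clearly''; both additions are correct and harmless.
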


\begin{proof}
    Suppose $A_f \subseteq \{a,t\}$. We see that the linear combination above may be rewritten as
    \begin{align*}
        \sum_{\substack{n=1 \\ \gcd(n,t) = a,t}}^t f(n)\psi\left(\frac{n/a}{t/a}\right) = 0.
    \end{align*}
    Setting $g(m) = f(ma)$, then $g : \ZZ/(t/a)\ZZ \to K$, and furthermore $L(1,g) = 0$, as
    \begin{align*}
        \sum_{m=1}^{t/a} g(n) \psi(m/t)  = 0 && \sum_{m=1}^{t/a} g(n) = 0.
    \end{align*}
    Thus by \cite[Theorem~1]{nonvanishing}, we have a contradiction, as $A_g \subseteq \{1,t/a\}$, and clearly the $t/a$-th cyclotomic polynomial is irreducible over $K$.
\end{proof}
This also shows that $\abs{A_f} \geq 2$. Now suppose that $A_f$ is minimal with respect to subset inclusion among nonzero functions satisfying \cref{eq:f-vanishes-2}.

Fix some $r$ with $f(r) \neq 0$ and let $a \coloneqq \gcd(r,t)$. We must show the equality $f(n) = f(hn)$ for any $n$ and any $h \in (\ZZ/t\ZZ)^\times$ satisfying $h \equiv 1 \pmod{t/a}$. First we see from \Cref{lemma:nonvanishing} that
\begin{align*}
    \sum_{n=1}^t f(hn)\psi(n/t) = 0,
\end{align*}
since there is some $\sigma \in \Gal(\Qbar/K)$ with $\sigma^{-1}\xi_t = \xi_t^h$ as the $t$-th cyclotomic polynomial is irreducible over $K$. This implies that $L(1,f^\sigma) = 0$, which then implies the claimed equality. Now set $g(n) = f(n) - f(hn)$. We see that
\begin{align*}
    \sum_{n=1}^t g(n)\psi(n/t) = 0 && \sum_{n=1}^t g(n) = 0.
\end{align*}
We will now show that $A_g \subsetneq A_f$, and thus $g(n) = 0$ for all $n$, proving the theorem.

First note that $A_g \subseteq A_f$, since $\gcd(hn,t) = \gcd(n,t)$. Thus it suffices to show that $a \not\in A_g$. To do this, let $s \in \ZZ/t\ZZ$ with $\gcd(s,t) = a$. We first compute $hs$ modulo $t$. To do this note that since $a \mid s$ and also $t/a \mid h-1$ we have that that $t \mid (h-1)s$, and thus $hs = s \pmod{t}$. This completes the proof, as then $g(s) = 0$, for all such $s$, showing that $A_g \subsetneq A_f$. \qed

\section{Connection to Galois Representations}\label{sec:galois}

The key lemma of Baker, Birch, and Wirsing can be phrased in terms of a $K$-linear representation of the Galois group $\Gal(\Qbar/K)$. This illustrates an unexpected relationship between linear relations among values of the digamma function at rational values and representation theory. Throughout this section, we'll consider the following $\Qbar$-vector space $V$:
\begin{align*}
    \mathcal{F} \coloneqq \left\{f : \ZZ \to \Qbar \mid f \text{ is periodic}\right\}.
\end{align*}
Note that $\mathcal{F}$ is a $\Qbar$ vector space with countable dimension. This vector space is equipped with linear maps $L(s,-)$ from $\mathcal{F}$ into $\CC$, given by the $L$-function
\begin{align*}
    L(s,f) = \sum_{n=1}^\infty \frac{f(n)}{n^s}
\end{align*}
for $\Re(s) > 1$. Of particular interest to us are the subspaces
\begin{align*}
    \mathcal{F}^0 &\coloneqq \left\{f \in \mathcal{F} \mid \sum_{n=1}^t f(n) = 0, \text{ where } t \text{ is the period of } f\right\} \\
    \mathcal{F}_{K} &\coloneqq \left\{f \in \mathcal{F} \mid \forall n \in \ZZ, f(n) \in K\right\} \\
    \mathcal{F}_{K,t} &\coloneqq \left\{f \in \mathcal{F}_K \mid f \text{ has period } t\right\},
\end{align*}
as well as $\mathcal{F}_K^0, \mathcal{F}_{K,t}^0$ (defined in the same way as $\mathcal{F}^0$). On $\mathcal{F}^0$, the $L$-functions above may be extended to $s = 1$, and so
\begin{align*}
    L(1,f) = \sum_{n=1}^\infty \frac{f(n)}{n}
\end{align*}
defines a linear operator on $\mathcal{F}^0$. In the previous section, we were focused on understanding whether certain linear combinations belonged to the kernel of $L(1,-)$. The key lemma of Baker, Birch,
% Birch?
and Wirsing amounts to the invariance of this kernel under a certain action of $\Gal(\Qbar/K)$.
\begin{proposition}
    Let $\sigma \in \Gal(\Qbar/K)$. For any $f \in \mathcal{F}$, define $f^\sigma$ by the formula
    \begin{align*}
        f^\sigma(n) = \sigma f(hn)
    \end{align*}
    where, if $f$ has period $t$, $h$ is defined modulo $t$ by $\sigma^{-1}\xi_t = \xi_t^h$. Then in fact, $(\sigma,f) \mapsto f^\sigma$ defines a $K$-linear representation of
    $\Gal(\Qbar/K)$.
    % wait f??? shouldn't it be Gal_Q??
    
    Furthermore, if $L \subseteq \Qbar$ is $\Gal(\Qbar/K)$ invariant then $\mathcal{F}^0,\mathcal{F}_L,\mathcal{F}_{L,t}$ are invariant subspaces for this representation. More strikingly, $\ker L(1,-)$ is an invariant subspace of $\mathcal{F}^0$.
\end{proposition}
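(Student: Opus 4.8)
The plan is to treat the three assertions in order of increasing depth: verify the representation structure by direct computation, check invariance of the three ``structural'' subspaces by hand, and then obtain the invariance of $\ker L(1,-)$ as an essentially immediate consequence of \Cref{lemma:nonvanishing}. Throughout, write $\rho(\sigma)f \coloneqq f^\sigma$, and for $f$ of period $t$ let $h_\sigma \in (\ZZ/t\ZZ)^\times$ denote the residue with $\sigma^{-1}\xi_t = \xi_t^{h_\sigma}$.

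First I would address well-definedness, since the twist $h_\sigma$ is read off from the action of $\sigma$ on $\xi_t$ and I must check that $\rho(\sigma)f$ is independent of the chosen period: if $f$ has period $t$ it also has period $mt$, and raising $\sigma^{-1}\xi_{mt} = \xi_{mt}^{h'}$ to the $m$-th power gives $\sigma^{-1}\xi_t = \xi_t^{h'}$, so the two twists agree mod $t$ and $f(h_\sigma n) = f(h'n)$ for all $n$. Next, $K$-linearity of each $\rho(\sigma)$ is automatic: for $a,b \in K$ we have $\sigma(a)=a$ and $\sigma(b)=b$, so $\rho(\sigma)(af+bg) = a\,\rho(\sigma)f + b\,\rho(\sigma)g$; over all of $\Qbar$ the map is only $\sigma$-semilinear, namely $\rho(\sigma)(cf) = \sigma(c)\,\rho(\sigma)f$, which is exactly why $K$-linearity is the correct statement. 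The homomorphism property rests on the multiplicativity $h_{\tau\sigma} \equiv h_\sigma h_\tau \pmod t$: applying $\sigma^{-1}\tau^{-1}$ to $\xi_t$ and using first $\tau^{-1}\xi_t = \xi_t^{h_\tau}$ and then $\sigma^{-1}\xi_t = \xi_t^{h_\sigma}$ yields $(\tau\sigma)^{-1}\xi_t = \xi_t^{h_\sigma h_\tau}$. Consequently $(f^\sigma)^\tau(n) = \tau\sigma f(h_\sigma h_\tau n) = f^{\tau\sigma}(n)$, i.e.\ $\rho(\tau)\rho(\sigma) = \rho(\tau\sigma)$; since $\rho(\mathrm{id}) = \mathrm{id}$, each $\rho(\sigma)$ is invertible with inverse $\rho(\sigma^{-1})$, so $\rho$ is a $K$-linear representation of $\Gal(\Qbar/K)$.

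For the structural subspaces I would argue directly. For $\mathcal{F}^0$, reindexing the sum by the unit $h_\sigma$ gives $\sum_{n=1}^t f^\sigma(n) = \sigma\sum_{n=1}^t f(h_\sigma n) = \sigma\sum_{m=1}^t f(m) = \sigma(0) = 0$, and since the mean value over a full period is independent of the period chosen, this shows $f^\sigma \in \mathcal{F}^0$. For $\mathcal{F}_L$ (which is a $K$-subspace of $\mathcal{F}$ precisely when $L$ is a $K$-subspace of $\Qbar$), the $\Gal(\Qbar/K)$-invariance of $L$ gives $f^\sigma(n) = \sigma f(h_\sigma n) \in \sigma(L) = L$; intersecting with the $\rho$-stable condition ``period divides $t$'', which is preserved because $f^\sigma(n+t) = \sigma f(h_\sigma n + h_\sigma t) = f^\sigma(n)$, shows that $\mathcal{F}_{L,t}$ is invariant as well.

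Finally, $\ker L(1,-)$ is a $K$-subspace of $\mathcal{F}^0$ because $L(1,-)$ is a $\CC$-linear (hence $K$-linear) functional on $\mathcal{F}^0$, and its invariance is exactly the content of \Cref{lemma:nonvanishing}: for $f \in \mathcal{F}^0$ with $L(1,f)=0$ and any $\sigma \in \Gal(\Qbar/K) \subseteq \mathrm{Aut}(\Qbar)$, that lemma produces $L(1,f^\sigma) = 0$. Thus the only genuine bookkeeping lies in the representation axioms --- in particular the period-independence of $h_\sigma$ and the identity $h_{\tau\sigma} = h_\sigma h_\tau$, which together are the main (though still routine) obstacle --- while the ``more striking'' invariance of $\ker L(1,-)$ is a direct restatement of the Baker--Birch--Wirsing lemma and introduces no additional analytic difficulty here.
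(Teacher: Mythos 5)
Your proof is correct and follows essentially the same route as the paper: well-definedness via period-independence of the twist, $K$-linearity from $\sigma$ fixing $K$, the homomorphism property via multiplicativity of $h$, and invariance of $\ker L(1,-)$ as a direct citation of the Baker--Birch--Wirsing lemma. The only difference is that you spell out the invariance of $\mathcal{F}^0$, $\mathcal{F}_L$, and $\mathcal{F}_{L,t}$, which the paper dismisses as ``easy to check.''
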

\vspace{-0.1in}
\begin{proof}
    The claimed invariances for $\mathcal{F}^0,\mathcal{F}_L,$ and $\mathcal{F}_{L,t}$ are easy to check, and the final statement follows from the work of Baker, Birch, Wirsing in \cite[Lemma~4]{nonvanishing}. Thus it suffices to show that this mapping truly defines a $K$-linear action. First, if $f$ has period $t$, it also has period $tx$ for any natural number $x \geq 1$, so we must show that the action does not depend on choosing the minimal period. To do this, note that since $\xi_{t} = \xi_{tx}^x$
    \begin{align*}
        \sigma^{-1} \xi_t &= (\sigma^{-1} \xi_{tx})^x = (\xi_{tx}^h)^x = \xi_t^h,
    \end{align*}
    where $\sigma^{-1}\xi_{tx} = \xi_{tx}^h$.
    
    To show that this truly defines an action, fix $f,g \in \mathcal{F}$ of periods $t_1,t_2$ respectively as well as $\sigma,\tau \in \Gal(\Qbar/K)$ and $c \in K$. First note that $f+g$ has period $t = t_1t_2$, and by the above we may set $h,h'$ so that $\sigma^{-1}\xi_t = \xi_t^h, \tau^{-1}\xi_t = \xi_t^{h'}$. Then we have that
    \begin{align*}
        (cf)^\sigma(n) &= \sigma(cf(hn)) = \sigma(c)\sigma f(hn) = c\sigma f(hn) = cf^\sigma(n) \\
        (f+g)^\sigma(n) &= \sigma((f+g)(hn)) = \sigma f(hn) + \sigma g(hn) = f^\sigma(n) + g^\sigma(n),
    \end{align*}
    and also
    \begin{align*}
        (\tau\sigma)^{-1} \xi_t &= \sigma^{-1}\tau^{-1} \xi_t = \sigma^{-1}\xi_t^{h'} = \xi_t^{h'h} \\
        (f^\sigma)^{\tau}(n) &= \tau f^\sigma(h'n) = \tau\sigma f(hh'n) = f^{\tau\sigma}(n).
    \end{align*}
    Therefore this is in fact a $K$-linear representation of $\Gal(\Qbar/K)$.
\end{proof}

\begin{remark}
    Let $\Qab$ denote the maximal abelian extension of $\Q$. The action of $\Gal(\Qbar/K)$ on $\mathcal{F}_L$ where $L \subseteq K \subseteq \Qab$ is defined entirely in terms of its action on the roots of unity. Thus, this representation factors through $\Gal(\Qab/K)$. This greatly simplifies the analysis of the problem, as $\Gal(\Qab/K)$ can be computed explicitly.
    
    In our particular case, we deal with the subspaces $\mathcal{F}_{K,t}^0$ for fixed $t$ and $K \subseteq \Qab$. Here, the representation is completely determined by the action of $\Gal(\Qab/K)$ on the $t$-th roots of unity, and so in fact factors through $\Gal(\Q(\xi_t)/K) \subseteq \Gal(\Q(\xi_t)/\Q) \cong (\ZZ/t\ZZ)^\times$. Furthermore, the space $\mathcal{F}_{K,t}$ is finite-dimensional over $K$, and so the representation theory is entirely classical.
\end{remark}

\section{Future Work}\label{sec:future}

\Cref{thm:no-ties} shows that $\psi_{k,t}(r) - \psi_{k,t}(s) \neq 0$ for any coprime $k,t$ and distint $1 \leq r,s \leq t$. This leaves open the question of how large or how small this difference can be, and it also leaves open the question of whether two of these differences can coincide. More qualitatively, how quickly does the gap between the counts $D^\times_k(r,t;n)$ and $D_k^\times(s,t;n)$ grow, and does the gap grow at a different rate for each pair $r,s$? 

The methods employed in this paper are unequipped to deal with the first question outright, as the Galois twist $f \mapsto f^\sigma$ simply preserves $\ker L(1,-)$, and it is not known how the size of $L(1,f)$ compares to that of $L(1,f^\sigma)$ when $L(1,f) \neq 0$. For convenience, define the quantities
\begin{align*}
    \Psi_{k,t} &\coloneqq \{\abs{\psi_{k,t}(r) - \psi_{k,t}(s)} \mid 1 \leq r < s \leq t\} \\
    \mathcal{M}_{k,t} &\coloneqq \max \Psi_{k,t} && \mathcal{M}_t \coloneqq \max_{\gcd(k,t) = 1} \mathcal{M}_{k,t} \\
    m_{k,t} &\coloneqq \min \Psi_{k,t} && m_t \coloneqq \min_{\gcd(k,t) = 1} m_{k,t}.
\end{align*}
As we can see in \Cref{fig:M1}, $\mathcal{M}_t$ grows linearly in $t$. In fact, something better is true. The quadruples $(k,t,r,s)$ for which $\abs{\psi_{k,t}(r) - \psi_{k,t}(s)}$ are maximal are extremely predictable. They are given by $(k,t,r,s) = (t-1,t,1,t-1)$.

In contrast, $m_t$ behaves erratically. A naive plot of $m_t$ is displayed in \Cref{fig:m}. Plotting $-\log m_t$ gives a more illuminating picture, as in \Cref{fig:mlog}. Motivated by these plots, we make the following conjecture.
\begin{conjecture}\label{conj:mtMT}
    $\mathcal{M}_t/t \to 1$ as $t \to \infty$ and the function $\abs{\log m_t/\sqrt{t}}$ is bounded. Furthermore the maximum $\mathcal{M}_t$ is achieved by the quadruples $(k,t,r,s) = (t-1,t,1,t-1)$.
\end{conjecture}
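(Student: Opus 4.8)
The conjecture bundles three assertions: the determination of the extremal quadruple, the resulting asymptotic $\mathcal{M}_t/t \to 1$, and the lower bound governing $m_t$. I would attack them in that logical order, since the first implies the second. The natural starting point is the reduction $\mathcal{M}_{k,t} = \max_{1\le r \le t}\psi_{k,t}(r) - \min_{1\le r\le t}\psi_{k,t}(r)$, together with the splitting $\psi_{k,t}(r) = -\psi(r/t) + \tfrac1k\psi(\rho_{k,t}(r)/t)$ and the two standard estimates $\psi(x) = -1/x - \gamma + O(x)$ as $x \to 0^+$ and the reflection formula $\psi(1-x) - \psi(x) = \pi\cot(\pi x)$. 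These show that over $r \in \{1,\dots,t\}$ the values $\psi(r/t)$ fill the interval $[\psi(1/t),\psi(1)] = [-t + O(1),\,-\gamma]$, and that the leading term $-\psi(r/t)$ is \emph{sharply peaked}: one has $-\psi(1/t) = t + O(1)$ but already $-\psi(2/t) = t/2 + O(1)$.

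For the extremal quadruple and the asymptotic, the dominant term $-\psi(r/t)$ pushes the maximizer of $\psi_{k,t}$ toward $r=1$, while the perturbation $\tfrac1k\psi(\rho/t)$ has range at most $\tfrac1k\bigl(\psi(1)-\psi(1/t)\bigr) \le t/2$ and must be controlled through the coupling $\rho = k^{-1}r \bmod t$. I would first verify the candidate directly: for $k = t-1$ one computes $\rho_{t-1,t}(1) = t-1$ and $\rho_{t-1,t}(t-1) = 1$, whence
\[
    \psi_{t-1,t}(1)-\psi_{t-1,t}(t-1) = \frac{t}{t-1}\left(\psi\!\left(\tfrac{t-1}{t}\right)-\psi\!\left(\tfrac1t\right)\right) = \frac{t}{t-1}\,\pi\cot(\pi/t) = t + O(1),
\]
which gives $\mathcal{M}_t \ge t + O(1)$ and hence $\liminf \mathcal{M}_t/t \ge 1$. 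The matching upper bound $\mathcal{M}_{k,t} \le t + O(1)$, uniform over all $k$ coprime to $t$, is the crux. The point is that whenever $\rho = k^{-1}r$ is small, so that $\tfrac1k\psi(\rho/t)$ is very negative, the index $r = k\rho \bmod t$ is constrained, and the perturbative gain is cancelled by a loss in $-\psi(r/t)$ unless a modular wraparound occurs — but wraparound forces $k$ large and thereby kills the $\tfrac1k$ prefactor. \textbf{This cancellation is the main obstacle:} when $k$ is bounded the two contributions are genuinely of the same order $\Theta(t)$, so a crude triangle inequality only yields $\tfrac32 t$ and loses the constant; one must run a genuine two-variable argument over $(r,\rho=k^{-1}r)$ that exploits the concavity of $\psi$.

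The bound on $m_t$ is of a different and harder nature, being arithmetic rather than analytic. I would invoke Gauss's digamma theorem to write each $\psi(n/t)$ as a combination of $-\gamma$, $\log(2t)$, a term $-\tfrac{\pi}{2}\cot(\pi n/t)$, and a sum $2\sum_j \cos(2\pi nj/t)\log\sin(\pi j/t)$. Since the coefficients of any difference $\psi_{k,t}(r)-\psi_{k,t}(s)$ sum to zero, the $\gamma$ and $\log(2t)$ contributions cancel, leaving a linear form in $\pi$ and in logarithms of algebraic numbers, all of height and degree $O(t)$. By \Cref{thm:no-ties} this form is nonzero, so a Baker-type lower bound for linear forms in logarithms produces $m_t \ge e^{-\mathrm{poly}(t)}$, and the conjectured estimate $|\log m_t| \le C\sqrt t$ would follow from a sufficiently strong such bound. \textbf{Here lies the genuinely open core:} standard linear-forms-in-logarithms estimates give an exponent that is polynomial in $t$ through the number of terms and their heights, far weaker than the conjectured square-root rate, so obtaining $e^{-C\sqrt t}$ appears to be beyond current transcendence technology. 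I therefore expect the qualitative statements (the extremal quadruple and $\mathcal{M}_t/t \to 1$) to be provable by the analytic route above, while the sharp square-root control of $m_t$ remains the principal obstruction.
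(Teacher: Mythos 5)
The first thing to say is that the paper does not prove this statement: it is posed as an open conjecture in the Future Work section, supported only by the numerical plots in Figures 1--3, and the paper itself concedes that its methods are ``unequipped'' here, since the Galois-twist technique only preserves $\ker L(1,-)$ and says nothing about the magnitude of nonzero values $L(1,f)$. So there is no paper proof to compare against, and your own conclusion --- that you cannot fully prove the statement --- is the correct one to reach. The useful question is whether the parts you sketch are sound and how far they actually go.

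On the $\mathcal{M}_t$ side you are more pessimistic than necessary: the ``crux'' you identify (cancellation when $k$ is bounded) can be closed, and your own heuristic (``wraparound forces $k$ large'') is exactly the mechanism. Write $\psi(x) = -1/x + \psi(x+1)$ with $\psi(x+1) \in [-\gamma, 1-\gamma]$ for $x \in (0,1]$, so that uniformly $\psi_{k,t}(r) = \frac{t}{r} - \frac{t}{k\rho} + O(1)$ with $\rho = \rho_{k,t}(r)$. Since $k\rho \equiv r \pmod{t}$ with $1 \le r, \rho \le t$, we have $k\rho = r + qt$ for some integer $q \ge 0$: if $q = 0$ the two main terms cancel exactly, and if $q \ge 1$ then $k\rho > t$ forces $\frac{t}{k\rho} < 1 \le \frac{t}{r}$. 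Either way $\frac{t}{r} - \frac{t}{k\rho} \ge 0$, so $\min_r \psi_{k,t}(r) \ge -O(1)$ while trivially $\max_r \psi_{k,t}(r) \le t + O(1)$; hence $\mathcal{M}_{k,t} \le t + O(1)$ uniformly in $k$, which together with your (correct) computation at $(k,t,r,s) = (t-1,t,1,t-1)$ yields $\mathcal{M}_t/t \to 1$. Note, however, that such $O(1)$-level estimates cannot settle the conjecture's stronger claim that the maximum is achieved \emph{exactly} by the quadruple $(t-1,t,1,t-1)$ for each fixed $t$; that requires controlling the bounded error terms, which neither you nor the paper attempts. On the $m_t$ side your assessment is right and, if anything, understated: applying a Baker--Matveev lower bound to the linear form coming from Gauss's digamma theorem involves a number of logarithms growing like $t$, and the constants in such bounds degrade exponentially in that number, so the resulting estimate is far weaker than the conjectured $e^{-C\sqrt{t}}$. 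That part is genuinely open, and the paper offers nothing toward it beyond the numerics.
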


\begin{remark}
    Generating these plots is very computationally expensive. Naively, one must compute $\abs{\psi_{k,t}(r)-\psi_{k,t}(s)}$ for each $1 \leq r,s \leq t$ and each $k$ coprime to $t$ and less than $\frac{6}{\pi^2}(t^2-1)$. Improvements to the algorithm used to produce these numerics would help provide further evidence of \Cref{conj:mtMT}
\end{remark}

\begin{figure}[H]
    \centering
    \includegraphics[scale=0.9]{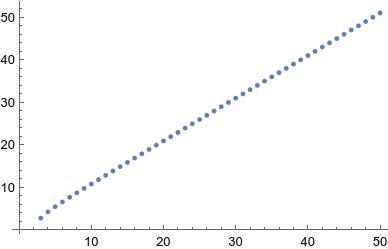}
    \caption{Plot of $\mathcal{M}_t$ for $3 \leq t \leq 50$}
    \label{fig:M1}
\end{figure}

\begin{figure}[H]
    \centering
    \includegraphics[scale=0.6]{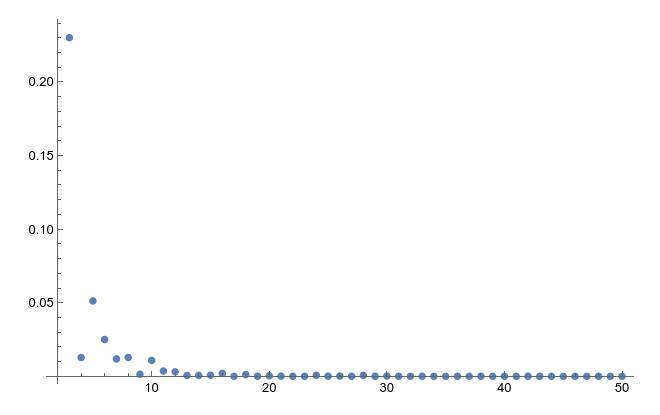}
    \caption{Plot of $m_t$ for $3 \leq t \leq 50$}
    \label{fig:m}
\end{figure}

\begin{figure}[H]
    \centering
    \includegraphics[scale=0.4]{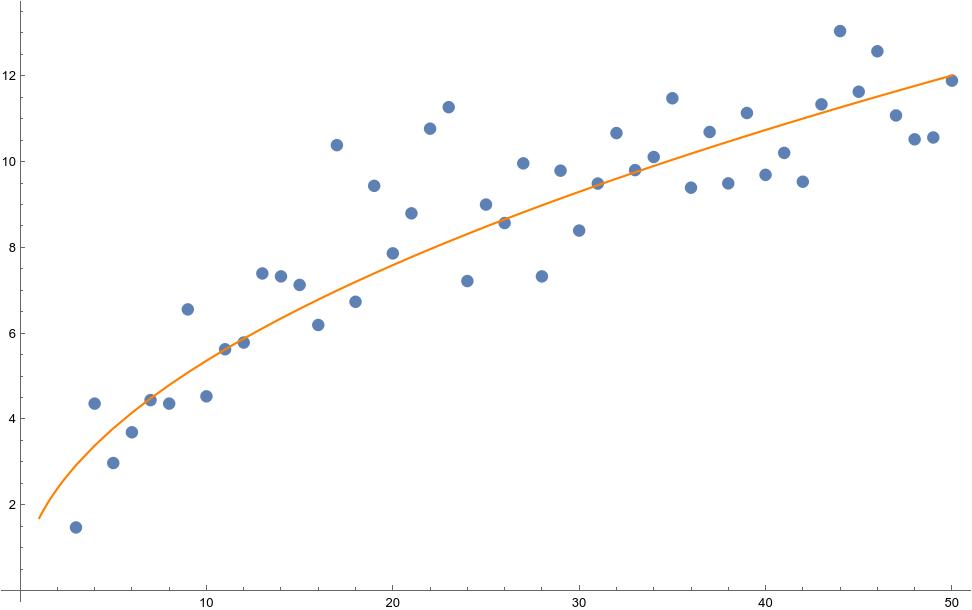}
    \caption{Plot of $-\log m_t$ in blue versus $1.7\sqrt{t}$ in orange for $3 \leq t \leq 50$}
    \label{fig:mlog}
\end{figure}

\printbibliography

\end{document}